\newtheorem{maintheorem}{Theorem}
\newtheorem{theorem}{Theorem}[section]
\newtheorem{lemma}[theorem]{Lemma}
\theoremstyle{definition}
\theoremstyle{remark}
\newsavebox{\proofbox}
\savebox{\proofbox}{\begin{picture}(7,7)  \put(0,0){\framebox(7,7){}}\end{picture}}
\newcommand{\md}[1]{\ensuremath{\,(\operatorname{mod}\, #1)}}
\renewcommand{\leq}{\leqslant}
\renewcommand{\geq}{\geqslant}
\def\R{\mathbf{R}}
\def\Z{\mathbf{Z}}
\def\E{\mathbf{E}}
\def\P{\mathbf{P}}
\def\N{\mathbf{N}}
\def\eps{\varepsilon}
\numberwithin{equation}{section}
\begin{document}

\title{F{\o}lner sequences and sum-free sets}



\author{Sean Eberhard}
\address{Sean Eberhard, Centre for Mathematical Sciences, University of Cambridge}
\email{s.eberhard@dpmms.cam.ac.uk}

\onehalfspace

\begin{abstract}
Erd\H{o}s showed that every set of $n$ positive integers contains a subset of size at least $n/(k+1)$ containing no solutions to $x_1 + \cdots + x_k = y$. We prove that the constant $1/(k+1)$ here is best possible by showing that if $(F_m)$ is a multiplicative F{\o}lner sequence in $\N$ then $F_m$ has no $k$-sum-free subset of size greater than $(1/(k+1)+o(1))|F_m|$.
\end{abstract}
\maketitle


\section{Introduction}
Let $k\geq 2$ be an integer. A subset $A$ of an abelian group is called $k$-sum-free if there do not exist $a_1, \dots, a_k \in A$ such that $a_1 + \cdots + a_k \in A$. In 1965 Erd\H{o}s~\cite{erdos1} proved with the following ingenious argument that every set $A$ of $n$ positive integers has a $k$-sum-free subset of size at least $n/(k+1)$. Since the open interval $S \subset \R/\Z$ of length $1/(k+1)$ centred at $1/(2k-2)$ is $k$-sum-free, it follows that for each $x\in \R/\Z$ the set $A_x$ of $a\in A$ such that $ax\in S$ is also $k$-sum-free. But if $x$ is chosen uniformly at random from $\R/\Z$ then for each $a\in A$ the product $ax$ also has the uniform distribution, so by linearity of expectation the expected size of $A_x$ is $n/(k+1)$. In particular $|A_x|\geq n/(k+1)$ for some $x$.

Our main theorem is that the constant $1/(k+1)$ in this theorem cannot be improved: for every $\eps>0$ there is a set of $n$ positive integers containing no $k$-sum-free subset of size greater than $(1/(k+1)+\eps)n$. In fact, we can give explicit examples of sets with no large $k$-sum-free subsets. Call a sequence $(F_m)$ of subsets $F_m\subset\N$ a \emph{F{\o}lner sequence in $(\N,\cdot)$} if, for every fixed $a\in\N$,
\[
	\frac{|(a\cdot F_m)\triangle F_m|}{|F_m|} \longrightarrow 0\quad\text{as }m\to\infty,
\]
where $\triangle$ denotes symmetric difference. For example, if
\[
	F_m = \{p_1^{e_1}\cdots p_m^{e_m} : 0\leq e_i<m\},
\]
where $p_1,p_2,\dots$ are the primes, then $(F_m)$ is a F{\o}lner sequence in $(\N,\cdot)$. For any such sequence, the sets $F_m$ eventually have no large $k$-sum-free subsets.

\begin{maintheorem}\label{thm:main}
If $(F_m)$ is a F{\o}lner sequence in $(\N,\cdot)$ then $F_m$ has no $k$-sum-free subset of size greater than $(1/(k+1) + o(1))|F_m|$.
\end{maintheorem}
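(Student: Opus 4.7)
The plan is to argue by contradiction, transferring the problem from $(\N,\cdot)$ to the circle $\T$ via the maps $\phi_x\colon\N\to\T$, $n\mapsto nx\bmod 1$. These are simultaneously additive and multiplicative homomorphisms: multiplicativity makes them compatible with the F{\o}lner structure, while additivity ensures that $k$-sum-free sets in $\N$ map to $k$-sum-free sets in $\T$. Suppose for contradiction that there exist $k$-sum-free $A_m\subseteq F_m$ with $|A_m|/|F_m|\ge 1/(k+1)+\eps$. By the mean ergodic theorem for the amenable semigroup $(\N,\cdot)$ acting on $\T$ by multiplication, for Lebesgue-a.e.\ $x\in\T$ the sequence $(nx\bmod 1)_{n\in F_m}$ equidistributes in $\T$ as $m\to\infty$. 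For such irrational $x$, $\phi_x$ is injective on $\N$, and its additive homomorphism property makes $\phi_x(A_m)\subseteq\T$ a finite $k$-sum-free set: if $\phi_x(a_1)+\cdots+\phi_x(a_k)=\phi_x(a_0)$ with $a_i\in A_m$, then $(a_1+\cdots+a_k-a_0)x\in\Z$, which for $x$ irrational forces $a_1+\cdots+a_k=a_0$, contradicting $k$-sum-freeness of $A_m$.

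I would next cover $\phi_x(A_m)$ by an open set and combine with Kneser's inequality. Let $U_{m,\delta}$ be the open $\delta$-neighborhood of $\phi_x(A_m)$ in $\T$. For appropriate $\delta=\delta_m\to 0$, $U_{m,\delta_m}$ is itself $k$-sum-free in $\T$; the requirement is the Diophantine inequality $\|dx\|_\T>(k+1)\delta_m$ for every nonzero $d$ in the finite set $kA_m-A_m$. The circle version of Erd{\H o}s's bound---every measurable $k$-sum-free $U\subseteq\T$ satisfies $|U|\le 1/(k+1)$, by Kneser's inequality $|kU|\ge k|U|$ combined with $kU\cap U=\emptyset$---then gives $|U_{m,\delta_m}|\le 1/(k+1)$. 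Meanwhile, equidistribution of $\phi_x(F_m)$ yields $|\{n\in F_m:\phi_x(n)\in U_{m,\delta_m}\}|/|F_m|\approx|U_{m,\delta_m}|$, which is at least $|A_m|/|F_m|$ by construction. Combining, $|A_m|/|F_m|\le 1/(k+1)+o(1)$, a contradiction.

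The main obstacle is a scale tension in choosing $\delta_m$. The Diophantine condition typically forces $\delta_m\lesssim 1/\max F_m$, whereas reliable equidistribution on intervals of length $\delta_m$ demands $\delta_m$ at least the discrepancy scale of $\phi_x(F_m)$; for the canonical F{\o}lner sequence $F_m=\{p_1^{e_1}\cdots p_m^{e_m}\}$, $\max F_m$ grows much faster than $|F_m|^{1/2}$, so generic discrepancy bounds fall short. Overcoming this requires exploiting the multiplicative product structure of $F_m$ to extract sharp cancellation in the exponential sums $\sum_{n\in F_m}e^{2\pi iknx}$, matched to a quantitative choice of $\delta_m$ compatible with both the Diophantine and the equidistribution constraints.
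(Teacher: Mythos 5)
Your transfer to the circle runs into an obstruction at exactly the point you flag, but it is a hard logical impossibility, not a quantitative shortfall that sharper exponential sums could repair. Write $N_m = |F_m|$ and $M_m = \max F_m$. For a positive-measure set of $x$ to satisfy the Diophantine requirement $\|dx\| > (k+1)\delta_m$ for every nonzero $d\in kA_m - A_m$ (a set of $\Theta(M_m)$ integers) you must take $\delta_m \lesssim 1/M_m$. On the other hand $U_{m,\delta_m}$ is a union of at most $|A_m|$ intervals of length $2\delta_m$, so $|U_{m,\delta_m}| \leq 2\delta_m N_m$; and since $A_m \subseteq \{n\in F_m : \phi_x(n)\in U_{m,\delta_m}\}$ by construction, the equidistribution estimate you want forces $|U_{m,\delta_m}| \geq |A_m|/N_m - o(1) \geq 1/(k+1)+\eps/2$, hence $\delta_m\gtrsim 1/N_m$. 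Together these require $M_m \lesssim N_m$, but the F{\o}lner property forces $M_m/N_m\to\infty$: for each fixed $a$, $aF_m\cap F_m$ consists of multiples of $a$ below $M_m$, so $|aF_m\cap F_m|\leq M_m/a$, while the F{\o}lner condition gives $|aF_m\cap F_m| = (1-o(1))N_m$, whence $M_m\geq (a-o(1))N_m$ for every $a$. More conceptually, $U_{m,\delta_m}$ is placed precisely to contain every point of $\phi_x(A_m)$, so the empirical measure of $\phi_x(F_m)$ on the test set $U_{m,\delta_m}$ is $\geq 1/(k+1)+\eps$ by fiat; the discrepancy at this adversarial test set is therefore bounded below by $\eps$, regardless of any cancellation in $\sum_{n\in F_m}e^{2\pi i d n x}$. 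No choice of $\delta_m$ and $x$ can simultaneously make $U_{m,\delta_m}$ $k$-sum-free and make the equidistribution error $o(1)$.

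This reflects a genuine limitation of the circle as a target: the bound $\mu(U)\leq 1/(k+1)$ only controls positive-measure $k$-sum-free subsets of $\T$, and thickening the finite set $\phi_x(A_m)$ to positive measure destroys the connection to $A_m$ at the scales available. The paper instead transfers to $\N$ and invokes a \emph{structural} theorem of {\L}uczak and Schoen: every $k$-sum-free subset of $\N$ of upper density greater than $1/(k+1)$ is contained in a periodic $k$-sum-free set. The transfer (via Loeb measure in the infinitary proof, or via a finitely supported averaging measure in the finitary one) produces a $k$-sum-free subset of $\N$ of large upper density that also contains a multiple of every natural number, and this is incompatible with being contained in a periodic $k$-sum-free set. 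That structural input, which has no analogue on $\T$, is the ingredient your approach is missing.
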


We begin by giving a quick deduction of the case $k=2$ of this theorem from previous work. We then give two proofs in the general case: one short, infinitary, and ineffective, the other longer, finitary, and effective, though with poor bounds. In both proofs we rely on the theorem of {\L}uczak and Schoen~\cite{luczakschoen} that every maximal $k$-sum-free subset of $\N$ of upper density greater than $1/(k+1)$ is periodic. In the first proof we use this theorem as a black box, while for the second we prove a finitary version by closely following~\cite{luczakschoen}.

Ben Green, Freddie Manners, and I~\cite{EGM} recently proved that there is a set of $n$ positive integers containing no $2$-sum-free subset of size larger than $(1/3+o(1))n$. The method we used extends to the case of $3$-sum-free sets with a little work, but the method does not seem to extend easily to $k$-sum-free sets for $k>3$. Until now, the best result known in the case $k>3$ was the theorem of Bourgain~\cite{bourgain} that if $\delta_k$ is the largest constant such that every set of $n$ positive integers contains a $k$-sum-free set of size at least $\delta_k n$ then $\delta_k\to 0$ as $k\to\infty$.

\section{Deduction of the case $k=2$ from previous work}\label{sec:deduction}

Fix a F{\o}lner sequence $(F_m)$ in $(\N,\cdot)$. The F{\o}lner property will be used in the following way: if $x\in F_m$ is uniformly random and $E(x)$ is some event depending on $x$, then for every fixed $a\in\N$ we have
\[
	|\P(E(ax))-\P(E(x))| \leq \frac{|(a\cdot F_m)\triangle F_m|}{|F_m|}\longrightarrow 0 \quad\text{as }m\to\infty.
\]
This allows us to imitate Erd\H{o}s's argument with $(F_m)$ in place of $\R/\Z$, as in the following lemma.

\begin{lemma}\label{lem:erdosfolner}
The following two statements are equivalent.
\begin{enumerate}
	\item For infinitely many $m$, $F_m$ has a $k$-sum-free subset of size at least $\delta |F_m|$.
	\item For every finite $A\subset\N$, $A$ has a $k$-sum-free subset of size at least $\delta|A|$.
\end{enumerate}
\end{lemma}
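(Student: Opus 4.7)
The plan is to observe that the direction (2)$\Rightarrow$(1) is immediate, since applying (2) to the finite set $A = F_m$ for each $m$ yields a $k$-sum-free subset of $F_m$ of size at least $\delta|F_m|$, for every $m$ (not just infinitely many). So the content of the lemma lies in (1)$\Rightarrow$(2), and here I would imitate Erd\H{o}s's averaging argument, using the F{\o}lner sequence $(F_m)$ as a substitute for the circle $\R/\Z$.

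Concretely, fix a finite set $A\subset\N$ and $\eps>0$. Using (1) together with the F{\o}lner property for each of the finitely many $a\in A$, I would choose $m$ large enough that $F_m$ admits a $k$-sum-free subset $S\subset F_m$ with $|S|\geq \delta|F_m|$, and simultaneously
\[
	\frac{|(a\cdot F_m)\triangle F_m|}{|F_m|} < \eps \qquad \text{for every } a\in A.
\]
Now pick $x\in F_m$ uniformly at random and set $A_x = \{a\in A: ax\in S\}$. The set $A_x$ is automatically $k$-sum-free: any relation $a_1+\cdots+a_k = b$ with $a_i,b\in A_x$ would multiply by $x$ to give a forbidden relation $a_1x+\cdots+a_kx = bx$ inside $S$.

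The expected size of $A_x$ is $\sum_{a\in A}\P(ax\in S)$, and for each $a\in A$ the estimate
\[
	\P(ax\in S) = \frac{|S\cap (a\cdot F_m)|}{|F_m|} \geq \frac{|S|}{|F_m|} - \frac{|F_m\setminus (a\cdot F_m)|}{|F_m|} \geq \delta - \eps
\]
follows from $S\subset F_m$ together with the F{\o}lner bound. Summing over $a\in A$ gives $\E|A_x|\geq (\delta-\eps)|A|$, so some $x\in F_m$ realises $|A_x|\geq (\delta-\eps)|A|$, exhibiting a $k$-sum-free subset of $A$ of that size.

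Finally, since the maximum size $M$ of a $k$-sum-free subset of $A$ is a fixed integer satisfying $M\geq(\delta-\eps)|A|$ for every $\eps>0$, we conclude $M\geq\delta|A|$, which is (2). The only delicate point is ensuring the F{\o}lner condition can be arranged \emph{uniformly} over $a\in A$; this is straightforward because $A$ is finite, but it is what forces us to choose $m$ depending on $A$ rather than just on $\delta$.
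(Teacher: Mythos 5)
Your proof is correct and follows essentially the same route as the paper: the trivial direction, then Erd\H{o}s's averaging argument over a random $x\in F_m$ using $A_x=\{a\in A: ax\in S\}$, with the F{\o}lner property controlling $|\P(ax\in S)-\P(x\in S)|$ for each of the finitely many $a\in A$. The only cosmetic difference is in the last step: the paper invokes integrality of $|A_x|$ to pass directly from $\E|A_x|\geq\delta|A|-o(1)$ to some $|A_x|\geq\delta|A|$, whereas you fix $\eps$, obtain a subset of size $\geq(\delta-\eps)|A|$, and then let $\eps\to0$ using the fact that the maximal $k$-sum-free subset size of $A$ is a fixed quantity independent of $\eps$ --- both are valid and amount to the same thing.
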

\begin{proof}
Of course (2) trivially implies (1), so it suffices to prove (1) implies (2). Suppose $F_m$ has a $k$-sum-free subset $S_m$ of size at least $\delta|F_m|$. For $x\in F_m$, let $A_x$ be the set of all $a\in A$ such that $ax\in S_m$. Then $A_x$ is $k$-sum-free, and if we choose $x\in F_m$ uniformly at random then the expected size of $A_x$ is
\begin{align*}
	\E(|A_x|) 
		&= \sum_{a\in A} \P(a\in A_x) = \sum_{a\in A} \P(ax\in S_m)\\
		&\geq \sum_{a\in A} \left(\P(x\in S_m) - \frac{|(a\cdot F_m)\triangle F_m|}{|F_m|}\right)\\
		&\geq \delta|A| - \sum_{a\in A} \frac{|(a\cdot F_m)\triangle F_m|}{|F_m|}.
\end{align*}
Hence from the F{\o}lner property and the integrality of $|A_x|$ it follows that for sufficiently large $m$ there is some $x\in F_m$ such that $|A_x|\geq\delta|A|$.
\end{proof}

By~\cite{EGM}, for every $\eps>0$ there is a set $A$ of $n$ positive integers with no $2$-sum-free subset of size greater than $(1/3+\eps)n$. From this and the above lemma we deduce the case $k=2$ of Theorem~\ref{thm:main}.

\section{Infinitary proof of Theorem~\ref{thm:main}}\label{sec:ineffproof}

In this section we assume basic familiarity with ultrafilters, and in particular Loeb measure. The reader needing an introduction might refer to Bergelson and Tao~\cite[Section 2]{bergelsontao}.

Again fix a F{\o}lner sequence $(F_m)$ in $(\N,\cdot)$, and assume for infinitely many $m$ that $F_m$ has a $k$-sum-free subset $S_m$ of size at least $\delta|F_m|$. By passing to a subsequence we may assume this holds for all $m$.

\begin{lemma}
There is an abelian group $X$, a $\sigma$-algebra $\Sigma$ of subsets of $X$, a probability measure $\mu$ on $\Sigma$, and a set $S\in\Sigma$ such that \textup{(1)} for every $a\in\N$ the map $x\mapsto ax$ is $\Sigma$-measurable and $\mu$-preserving, and \textup{(2)} $S$ is $k$-sum-free and $\mu(S)\geq\delta$.
\end{lemma}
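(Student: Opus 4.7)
The plan is to let $X$ be the ultrapower of $\Z$ equipped with a Loeb probability measure whose internal version assigns each $F_m$ its uniform probability measure. Fix a non-principal ultrafilter $\mathcal{U}$ on $\N$ and set $X = \prod_{m\to\mathcal{U}} \Z$; this is a commutative ring, in particular an abelian group under ultrapower addition, and each $a \in \N$ embeds diagonally as an element of $X$. For each $m$ let $\nu_m$ be the uniform probability measure on $F_m$, regarded as a Borel measure on $\Z$. These yield an internal probability measure on $X$, which extends via the usual Loeb construction to a $\sigma$-algebra $\Sigma$ and probability measure $\mu$, concentrated on the internal set $F = \prod_{\mathcal{U}} F_m$. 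For any internal $E = \prod_{\mathcal{U}} E_m$ one has $\mu(E) = \lim_{m\to\mathcal{U}} |E_m \cap F_m|/|F_m|$.

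For each $a\in\N$ define $T_a:X\to X$ by $T_a(x)=ax$. This is the ultraproduct of the measurable maps $y\mapsto ay$ on $\Z$, hence $\Sigma$-measurable. To verify that $T_a$ is $\mu$-preserving it suffices, by the Loeb extension procedure, to check that $\mu(T_a^{-1}(E)) = \mu(E)$ for every internal $E$. Since $y\mapsto ay$ is injective on $\Z$, it restricts to a bijection from $\{y\in F_m: ay\in E_m\}$ onto $E_m\cap aF_m$, so $|\{y\in F_m:ay\in E_m\}| = |E_m\cap aF_m|$, and therefore
\[
	\big||\{y\in F_m:ay\in E_m\}| - |E_m\cap F_m|\big| \leq |aF_m\triangle F_m|.
\]
Dividing by $|F_m|$ and using the F{\o}lner property, the right-hand side tends to $0$, and taking the ultralimit gives $\mu(T_a^{-1}(E)) = \mu(E)$, as desired.

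Finally take $S=\prod_{m\to\mathcal{U}}S_m$, an internal subset of $F$. Then $\mu(S) = \lim_{m\to\mathcal{U}} |S_m|/|F_m| \geq \delta$, and $S$ is $k$-sum-free by {\L}o\'s's theorem: any relation $x_1+\cdots+x_k = y$ with $x_1,\ldots,x_k,y\in S$ descends to the same relation in $S_m$ for $\mathcal{U}$-almost every $m$, contradicting the $k$-sum-freeness of $S_m$ for those $m$.

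The only delicate point is the measure-preservation of $T_a$: these maps preserve neither $F$ nor any $F_m$ as a set, so the statement is genuinely a statement about the Loeb measure, and the F{\o}lner property is needed to absorb boundary terms in the counting argument above. Everything else is routine ultraproduct formalism.
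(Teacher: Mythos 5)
Your proof is correct and takes essentially the same route as the paper: the same ultrapower $\prod_{m\to\mathcal{U}}\Z$, the same Loeb measure built from the normalized counting measures on $F_m$, the same internal set $S=\prod_{\mathcal{U}} S_m$, with $k$-sum-freeness via {\L}o\'s and measure invariance of $x\mapsto ax$ via the F{\o}lner property. You simply spell out the counting argument behind the measure-preservation step (the bijection onto $E_m\cap aF_m$ and the symmetric-difference bound) that the paper leaves implicit.
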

\begin{proof}
Let $p\in\beta\N\setminus\N$ be a nonprincipal ultrafilter, let $X$ be the ultraproduct $\prod_{m\to p}\Z$, and let $\Sigma$ be the Loeb $\sigma$-algebra on $X$. Defining $\mu_m$ on subsets of $\Z$ by
\[
	\mu_m(A) = |A\cap F_m|/|F_m|,
\]
let $\mu$ be the Loeb measure induced by $(\mu_m)$. Let $S$ be the internal set $\prod_{m\to p} S_m$.

To verify \textup{(1)}, note that $x\mapsto ax$ sends internal sets to internal sets, so it is measurable. Moreover $x\mapsto ax$ approximately preserves $\mu_m$ by the F{\o}lner property, so it exactly preserves the Loeb measure $\mu$. For \textup{(2)}, it follows from the basic properties of ultrafilters that $S$ is $k$-sum-free, and by definition of $\mu$ we have
\[
	\mu(S) = \text{st}\left(\lim_{m\to p} \mu_m(S_m)\right) \geq \delta. \qedhere
\]
\end{proof}

We define the \emph{upper density} of a set $A\subset\N$ by
\[
	\overline{d}(A) = \limsup_{n\to\infty} \frac{|A\cap\{1,\dots,n\}|}{n},
\]
and we define its \emph{upper density on multiples} by
\[
	\widetilde{d}(A) = \limsup_{N\to\infty} \overline{d}(A/N!),
\]
where $A/N! = \{a\in\N : N!a\in A\}$. Equivalently,
\[
	\widetilde{d}(A) = \limsup_{N\to\infty}\limsup_{n\to\infty} \frac{|A\cap\{N!, 2N!, \dots, nN!\}|}{n}.
\]

\begin{lemma}
There is a $k$-sum-free subset $A$ of $\N$ such that $\widetilde{d}(A)\geq\delta$.
\end{lemma}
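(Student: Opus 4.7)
The natural plan is to take a fiber of $S$ under multiplication. For each $x \in X$ define
\[
A(x) = \{a \in \N : ax \in S\} \subset \N,
\]
and aim to show that $A(x)$ is $k$-sum-free for every $x$ and that $\widetilde{d}(A(x)) \geq \delta$ for at least one $x$.

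The first point is immediate from the $k$-sum-freeness of $S$ and the fact that $a \mapsto ax$ is a homomorphism: if $a_1,\dots,a_k, a_1+\cdots+a_k \in A(x)$, then $a_1 x, \dots, a_k x \in S$ and $a_1 x + \cdots + a_k x = (a_1 + \cdots + a_k) x \in S$, contradicting that $S$ is $k$-sum-free. So it remains to produce one $x$ with large $\widetilde{d}(A(x))$, and for this I would bound the integral of $\widetilde{d}(A(x))$ from below.

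For fixed $N,n \in \N$, set
\[
f_{N,n}(x) = \frac{1}{n}\bigl|\{j \in \{1,\dots,n\} : jN!\, x \in S\}\bigr|.
\]
Because multiplication by $jN!$ preserves $\mu$, we have $\int f_{N,n}\,d\mu = \mu(S) \geq \delta$ for every $N$ and $n$. By definition, $\widetilde{d}(A(x)) = \limsup_N \limsup_n f_{N,n}(x)$. Since $0 \leq f_{N,n} \leq 1$, the reverse Fatou lemma applied to the inner $\limsup$ gives
\[
\int \Bigl(\limsup_{n\to\infty} f_{N,n}\Bigr)\,d\mu \; \geq \; \limsup_{n\to\infty}\int f_{N,n}\,d\mu \; \geq \; \delta,
\]
for every $N$, and a second application to the outer $\limsup$ yields $\int \widetilde{d}(A(x))\,d\mu \geq \delta$. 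Therefore $\widetilde{d}(A(x)) \geq \delta$ for some $x \in X$, and this $A(x)$ is the desired set.

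The only step that requires care is the double application of reverse Fatou, but since $f_{N,n}$ is measurable (being a finite sum of indicator functions of preimages of $S$ under measurable maps) and uniformly bounded by $1$ on the probability space $(X,\Sigma,\mu)$, the hypotheses are satisfied. Everything else—the $k$-sum-freeness transfer and the equality $\int f_{N,n}\,d\mu = \mu(S)$—is a direct consequence of the two properties of $S$ furnished by the previous lemma.
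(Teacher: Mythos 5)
Your proposal matches the paper's proof essentially verbatim: the paper defines the same fiber $A_x=\{a\in\N:ax\in S\}$, notes it is $k$-sum-free, and bounds $\E(\widetilde d(A_x))\geq\mu(S)\geq\delta$ by the same double application of (reverse) Fatou followed by measure-preservation of $x\mapsto aN!\,x$. You are merely more explicit about why Fatou's lemma applies (uniform boundedness on a probability space), which is a harmless and welcome clarification.
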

\begin{proof}
Let $X$, $\Sigma$, $\mu$, and $S$ be as in the previous lemma. For $x\in X$, let $A_x$ be the set of all $a\in\N$ such that $ax\in S$. Then $A_x$ is $k$-sum-free, $\widetilde{d}(A_x)$ is a $\Sigma$-measurable function of $x$, and if $x$ is chosen randomly with law $\mu$ then, by Fatou's lemma,
\begin{align*}
	\E(\widetilde{d}(A_x))
		&\geq \limsup_{N\to\infty} \limsup_{n\to\infty} \E\left(\frac{|A_x\cap\{N!,2N!,\dots,nN!\}|}{n}\right)\\
		&= \limsup_{N\to\infty}\limsup_{n\to\infty} \frac{1}{n}\sum_{a=1}^n \P(aN!\in A_x)\\
		&= \limsup_{N\to\infty}\limsup_{n\to\infty} \frac{1}{n}\sum_{a=1}^n \P(aN!x\in S)\\
		&= \limsup_{N\to\infty}\limsup_{n\to\infty} \frac{1}{n}\sum_{a=1}^n \P(x\in S)\\
		&= \mu(S)\geq \delta.\qedhere
\end{align*}
\end{proof}

To complete the proof of Theorem~\ref{thm:main}, it suffices to prove that there is no $k$-sum-free subset of $\N$ of upper density on multiples larger than $1/(k+1)$. We use the following theorem of {\L}uczak and Schoen.

\begin{theorem}[\cite{luczakschoen}]
Every $k$-sum-free subset of $\N$ of upper density larger than $1/(k+1)$ is contained in a periodic $k$-sum-free set.
\end{theorem}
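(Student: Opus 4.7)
Let $A \subset \N$ be $k$-sum-free with $\overline{d}(A) = \delta > 1/(k+1)$. The plan is to exhibit an integer $N$ and a $k$-sum-free subset $R \subset \Z/N\Z$ containing every residue realised by $A$ modulo $N$; the preimage $\bigcup_{r \in R}(r+N\Z)$ is then a periodic $k$-sum-free set containing $A$.

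The natural candidate is $R_N := \{r \in \Z/N\Z : A \cap (r+N\Z)\text{ is infinite}\}$. Subadditivity of upper density forces $|R_N| \geq \delta N > N/(k+1)$, and $A$ differs from $\bigcup_{r \in R_N}(r+N\Z)$ by a finite set of sporadic elements. The task reduces to showing that $R_N$ is $k$-sum-free in $\Z/N\Z$ for some $N$, together with a routine post-processing in a larger modulus $NM$ to absorb the finite sporadic set without creating new sum-relations. Suppose for contradiction that $R_N$ is never $k$-sum-free: for each $N$ there exist $r_1,\ldots,r_k,r \in R_N$ with $r_1+\cdots+r_k \equiv r \pmod{N}$. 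Writing $A_i = A\cap(r_i+N\Z)$ and $B = A\cap(r+N\Z)$, all infinite, a contradiction to the $k$-sum-freeness of $A$ would follow from finding $a_i \in A_i$ with $a_1+\cdots+a_k \in B$; note that every such sum automatically lies in the progression $r+N\Z$, which also contains $B$.

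The main obstacle is producing this intersection, since positive upper density of the sumset $A_1+\cdots+A_k$ and of $B$ inside the common progression $r+N\Z$ does not on its own force them to meet. I would address this by passing to a finitary version: choose $n_i\to\infty$ with $|A\cap[1,n_i]|/n_i\to\delta$ and work inside each window $[1,n_i]$, where the concrete excess $|A\cap[1,n_i]|-n_i/(k+1)>0$ supplies quantitative pigeonhole slack on $k$-sums of elements of $A\cap[1,n_i]$ falling into a prescribed residue class. Iterating over a cofinal sequence of moduli (for instance $N=n!$) and diagonalising over $n_i$ should then isolate a single $N$ for which $R_N$ is genuinely $k$-sum-free, or else locate an explicit forbidden configuration in $A$. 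The strict inequality $\delta > 1/(k+1)$ is precisely what makes this slack robust across scales, and is where the threshold in the hypothesis becomes binding.
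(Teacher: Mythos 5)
Your set-up is a sensible reformulation of the goal: if $A$ is $k$-sum-free with $\overline d(A)=\delta>1/(k+1)$, then being contained in a periodic $k$-sum-free set is essentially the same as exhibiting a modulus $N$ for which the set $R_N$ of residues hit infinitely often by $A$ is $k$-sum-free in $\Z/N\Z$ (your ``post-processing'' of the finitely many sporadic elements can be dispatched by passing from $A$ to a maximal $k$-sum-free superset, which is in fact the form in which {\L}uczak and Schoen state the result). The density bound $|R_N|\geq\delta N$ is also correct. But the proposal stops exactly where the theorem begins: you observe that finding $a_1\in A_1,\dots,a_k\in A_k$ with $a_1+\cdots+a_k\in B$ is ``the main obstacle,'' and then you assert, without any mechanism, that a finitary pigeonhole over windows $[1,n_i]$ together with diagonalising over moduli $N=n!$ ``should'' produce either such a configuration or a genuinely $k$-sum-free $R_N$. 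There is no reason offered why density excess over $1/(k+1)$ in a window forces a $k$-sum of $A$ in a prescribed residue class to land back in $A$; positive upper density of a sumset and of $B$ inside a common progression is compatible with them being disjoint, as you yourself note. This is not a routine pigeonhole, and it is where a real idea is needed.

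The paper's route is quite different and supplies that missing idea. It proves a finitary {\L}uczak--Schoen statement (Theorem~\ref{thm:fLS}) and then recovers the infinitary one by a short compactness argument. The finitary theorem does not try to show that a single residue structure $R_N$ is $k$-sum-free; instead it invokes Szemer\'edi's theorem to locate a long arithmetic progression $x,x+m,\dots,x+(i-1)m$ inside $A_{n_0}$ (after deleting a small exceptional set $D$), pairs it with an element $d\in A_{n_0}-(k-1)A_{n_0}$ in the same residue class mod $m$, and then runs an intricate disjointness/shifting argument: the translates $A$, $A+(k-1)x$, $A\setminus B+(k-1)x+jm$, and separately $A$, $B+v_1+\cdots+v_{k-1}$, $\dots$, $A+(k-1)u$, are shown pairwise disjoint on a suitable window, yielding two counting inequalities whose combination forces the density down to roughly $\frac{i+k-2}{i(k+1)+k-3}$, which tends to $1/(k+1)$ as $i\to\infty$. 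It is this AP-plus-shifting mechanism, not residue-class pigeonhole, that converts the hypothesis $\delta>1/(k+1)$ into a contradiction when no periodic structure is present. Your proposal omits any analogue of this step, so as written it is a plan with the central lemma left unproved rather than a proof.
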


\begin{lemma}
Every $k$-sum-free $A\subset\N$ satisfies $\widetilde{d}(A)\leq 1/(k+1)$.
\end{lemma}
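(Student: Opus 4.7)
The plan is to argue by contradiction. If $\widetilde{d}(A) > 1/(k+1)$, then unpacking the $\limsup$ in the definition of $\widetilde{d}$ yields some $N$ with $\overline{d}(A/N!) > 1/(k+1)$. My first move is to observe that $B := A/N!$ is itself $k$-sum-free: a relation $a_1 + \cdots + a_k = b$ in $B$ pulls back, upon multiplication by $N!$, to the same relation in $A$. Thus $B$ is a $k$-sum-free subset of $\N$ of upper density above $1/(k+1)$, and the {\L}uczak--Schoen theorem provides a periodic $k$-sum-free set $P \supseteq B$. Outside a finite set, $P$ is determined by a period $d \geq 1$ and a pattern $P' \subseteq \Z/d\Z$, meaning that every sufficiently large $a$ satisfies $a \in P$ iff $a \bmod d \in P'$.

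The structural input I would extract from $P$ is that $0 \notin P'$. Indeed, if $0$ were in $P'$, then $P$ would eventually contain every multiple of $d$; taking $k$ sufficiently large multiples of $d$ inside $P$, their sum is a large multiple of $d$ and hence also in $P$, contradicting the $k$-sum-freeness of $P$. This is the only place where the structure of periodic $k$-sum-free sets feeds into the proof, and it is where I expect the main (though still mild) subtlety, since one must keep track of the finite exceptional part of $P$.

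The contradiction then comes from divisibility of factorials. For any $L \geq N + d$, the identity $A/L! = B/K$ holds with $K := L!/N! = (N+1)(N+2)\cdots L$, and the $d$ consecutive factors $N+1,\dots,N+d$ in this product force $d \mid K$. Hence $Ka \equiv 0 \pmod d$ for every $a \in \N$, so $Ka \in P$ is possible only when $Ka$ lies in the finite exceptional part of $P$; this makes $B/K = A/L!$ a finite set, so $\overline{d}(A/L!) = 0$ for every $L \geq N + d$. Taking the $\limsup$ over $L$ gives $\widetilde{d}(A) = 0$, contradicting the initial assumption. The only substantive step is the observation $0 \notin P'$; the rest is bookkeeping, exploiting that $N!$ is divisible by every positive integer $\leq N$.
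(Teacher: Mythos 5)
Your proof is correct and rests on exactly the same two ingredients as the paper's: the {\L}uczak--Schoen theorem, and the observation that a periodic $k$-sum-free set must avoid (all sufficiently large) multiples of its period, combined with the divisibility $d \mid L!/N!$ for $L \geq N+d$. The paper packages this more directly: it shows that if $\widetilde{d}(A)>0$ then $A/N!$ contains a multiple of \emph{every} natural number for \emph{every} $N$ (by pulling a nonempty $A/L!$ down through the factor $L!/N!$), so $A/N!$ is never contained in a periodic $k$-sum-free set and the contrapositive of {\L}uczak--Schoen bounds $\overline{d}(A/N!)\leq 1/(k+1)$ for all $N$ at once. You instead assume $\widetilde{d}(A)>1/(k+1)$, apply {\L}uczak--Schoen to one $A/N!$, and then propagate forward to show $A/L!$ is finite for $L\geq N+d$, forcing $\widetilde{d}(A)=0$; this is a valid contrapositive-by-contradiction rearrangement of the same argument, slightly longer but not substantively different.
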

\begin{proof}
If $\widetilde{d}(A)>0$ then for every $N$ the set $A/N!$ contains a multiple of every natural number. In particular $A/N!$ is not contained in a periodic $k$-sum-free set, so, by the {\L}uczak-Schoen theorem,
\[
	\widetilde{d}(A) = \limsup_{N\to\infty} \overline{d}(A/N!) \leq 1/(k+1).\qedhere
\]
\end{proof}

\section{A finitary {\L}uczak-Schoen theorem}\label{sec:finitaryluczakschoen}

For a set $A\subset\N$, let $A_x = \{a\in A: a\leq x\}$, let $d_n(A) = |A_n|/n$, and let
\[
	A_n - (k-1)A_n = \{u - v_1 - \cdots - v_{k-1} : u,v_1,\dots,v_{k-1}\in A_n\}.
\]

\begin{lemma}
Suppose that $A\subset\N$ is $k$-sum-free, that $A_{n_0}$ contains an arithmetic progression $x, x+m, \dots, x+(i-1)m$, and that some $d\in A_{n_0}-(k-1)A_{n_0}$ satisfies $d\equiv x\md{m}$. Then for every $\eps>0$ and every sequence $n_1,n_2,\dots$ of naturals such that $n_{j+1}\geq\eps^{-1}n_j$ for each $j\geq 0$ there is some $\ell\leq kn_0$ such that
\[
	d_{n_\ell}(A) \leq \frac{i+k-2}{i(k+1)+k-3} + 4k\eps.
\]
\end{lemma}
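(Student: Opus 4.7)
The plan is to argue by contradiction. Set $\theta = (i+k-2)/(i(k+1)+k-3) + 4k\eps$ and $L = (k-1)(i-1)+1$; assume $d_{n_\ell}(A) > \theta$ for every $1 \leq \ell \leq kn_0$, and aim to derive a contradiction.

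The starting point is that $(A + (k-1)P) \cap A = \emptyset$, which follows from $k$-sum-freeness together with $P \subset A$: for each $a \in A$, the arithmetic progression $\{a + (k-1)x + jm : 0 \leq j \leq L-1\}$ is disjoint from $A$. A trivial Freiman-type lower bound $|A_n + (k-1)P| \geq |A_n| + L - 1$ only gives density $\leq 1/2$; to improve, I would use the stronger Kneser--Pl\"unnecke-type bound $|A_n + (k-1)A_n| \geq k|A_n| - O(L)$ that holds when $A$ has sufficient AP-like structure (as enforced by $P$). Combined with disjointness from $A$, this yields $(k+1)|A_n| \leq n + O(L)$, hence $d_n(A) \leq 1/(k+1) + O(L/n)$. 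The bound $(i+k-2)/(i(k+1)+k-3)$ arises from the slack in this analysis for finite $i$, interpolating between the trivial $1/2$ (at $i=1$) and the {\L}uczak--Schoen bound $1/(k+1)$ (as $i\to\infty$).

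The role of the element $d$ is to rule out degenerate configurations where the sumset bound is slack: if $A$ were concentrated in a single residue class mod $m$, then $(k-1)A$ would have unusually small spread and the Kneser--Pl\"unnecke bound could fail. The hypothesis $d \equiv x \md{m}$ with $d \in A_{n_0} - (k-1)A_{n_0}$ (hence $d \notin A$) forces the residue class $x \md{m}$ to contain both elements of $A$ (namely $x$) and elements of $(A - (k-1)A)\setminus A$ (namely $d$), which prevents such concentration and makes the bound applicable. The iteration over $\ell \leq kn_0$ with rapid growth $n_{\ell+1} \geq \eps^{-1} n_\ell$ ensures that the $O(L/n_\ell)$ correction terms are absorbed into the $4k\eps$ slack, and that any case distinctions arising in the sumset analysis have room to be resolved across the $kn_0$ scales.

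The main obstacle is the detailed Kneser--Pl\"unnecke analysis matching the specific bound $(i+k-2)/(i(k+1)+k-3)$. This requires closely following the original {\L}uczak--Schoen argument---which analyzes the stabilizer of $A + (k-1)A$ modulo an appropriate subgroup and then uses this periodic structure together with $k$-sum-freeness to pin down the density---adapted to the finitary setting with explicit tracking of all error terms over the $kn_0$ iterations.
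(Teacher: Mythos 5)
The proposal does not give a proof, and the approach it sketches has a genuine gap at its central step.

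The claim that disjointness of $A_n$ from $A_n+(k-1)A_n$ together with a lower sumset bound $|A_n+(k-1)A_n|\geq k|A_n|-O(L)$ yields $(k+1)|A_n|\leq n+O(L)$ is not correct: the sumset $A_n+(k-1)A_n$ lives in $\{k,\dots,kn\}$, not in $\{1,\dots,n\}$, so the disjoint union argument gives only $(k+1)|A_n|\leq kn+O(L)$, i.e.\ $d_n(A)\leq k/(k+1)+o(1)$ --- a factor of $k$ too weak. Getting the correct window $\{1,\dots,n_\ell\}$ is the whole difficulty, and the paper's proof handles it by translating not by arbitrary elements of $A$ but by elements $u,v_1,\dots,v_{k-1}\in A_{\eps n_\ell}$ that are tiny relative to the scale $n_\ell$. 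Producing such small elements is where the hypothesis $d\in A_{n_0}-(k-1)A_{n_0}$, $d\equiv x\ \mdlem{m}$ is actually used: one defines $u(t)$ to be the least $u\in A$ with $t=u-v_1-\cdots-v_{k-1}$ for some $v_i\in A$, notes $u(d)\leq n_0$ while $u(x+jm)=\infty$ for $0\leq j\leq i-1$, and runs a pigeonhole along the progression from $d$ to $x$ to find a $d'$ and a scale $n_\ell$ (with $\ell\leq kn_0$, using $n_{j+1}\geq\eps^{-1}n_j$) for which $u(d')\leq\eps n_\ell$ but $u(d'+jm)>n_\ell$ for $j=1,\dots,i$. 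This pigeonhole step is exactly what the iteration over $\ell\leq kn_0$ is for; it is not merely absorbing error terms, as you suggest.

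Your description of the role of $d$ is also not accurate. It is not there to prevent ``degenerate AP-concentration'' that would make a Kneser--Pl\"unnecke bound slack; it is there to seed the $u(\cdot)$ pigeonhole that produces the small translating elements. And the key structural device that actually delivers the ratio $(i+k-2)/(i(k+1)+k-3)$ is the set $B=\{a\in A: a+jm\in A\ \text{for some}\ 1\leq j\leq i\}$, which never appears in your sketch. The paper establishes two families of pairwise disjoint translates --- one giving $(i+1)|A_n|-(i-1)|B_n|\leq n+O(n_0)$ from $k$-sum-freeness and the AP $P\subset A$, the other giving $2|A_{n_\ell}|+(k-1)|B_{n_\ell}|\leq n_\ell+O(\eps n_\ell)$ at the good scale $\ell$ via the disjointness argument restricted to $\{1,\dots,n_\ell-im\}$ --- and then eliminates $|B|$ between them. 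Nothing in your proposal produces a second inequality of this kind, and no sumset lower bound alone can: a single disjointness relation only gives one linear constraint, which is insufficient to reach $1/(k+1)$. In short, the proposal misidentifies both the key lemma and the mechanism, and the step it does claim to execute does not give the stated inequality.
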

\begin{proof}
Assume first that $d<x$. Let
\[
	B=\{a\in A: a+jm\in A \text{ for some } j\in\{1,\dots,i\}\}.
\]
Then the sets $A, A+(k-1)x, A\setminus B+(k-1)x+m, \dots, A\setminus B+(k-1)x+(i-1)m$ are pairwise disjoint, so
\begin{equation}\label{|B_n|}
	(i+1)|A_n|-(i-1)|B_n| \leq n + (k-1)x + (i-1)m.\tag{$\ast$}
\end{equation}

For each $t\in\Z$ let $u(t)$ be the smallest element of $A$ such that
\[
	t = u(t) - v_1 - \cdots - v_{k-1}
\]
for some $v_1,\dots,v_{k-1}\in A$; if no such element exists let $u(t)=\infty$. Since $u(d)\leq n_0$ and $u(x+jm)=\infty$ for each $j\in\{0,\dots,i-1\}$, and since $d\geq -(k-1)n_0$, we may find $d'$ and $\ell$ such that $1\leq\ell\leq kn_0$, $u(d')\leq \eps n_\ell$, and $u(d'+jm)>n_\ell$ for each $j\in\{1,\dots,i\}$.

Write $d' = u - v_1 -\cdots - v_{k-1}$, where $u, v_1, \dots, v_{k-1}\in A_{\eps n_\ell}$. Then the sets $A, B+v_1+\cdots+v_{k-1}, B+u+v_1+\cdots+v_{k-2}, \dots, B+(k-2)u + v_1, A+(k-1)u$ are pairwise disjoint when restricted to $\{1,\dots,n_\ell-im\}$. Indeed, being $k$-sum-free certainly $A$ is disjoint from the others, while if for some $0\leq s<t\leq k-1$ we have
\[
	(B + su + v_1 + \cdots + v_{k-s-1})\cap(A + tu + v_1 + \cdots + v_{k-t-1})\cap\{1,\dots,n_\ell-im\} \neq\emptyset
\]
then there exists $b\in B_{n_\ell-im}$ and $a\in A$ such that
\[
	b+su + v_1 + \cdots + v_{k-s-1} = a + tu + v_1 + \cdots + v_{k-t-1},
\]
whence
\[
	d' = (b-a) - (t-s-1)u - v_1 - \cdots - v_{k-t-1}-v_{k-s}-\cdots -v_{k-1}.
\]
But since $b\in B$ there is some $j\in\{1,\dots,i\}$ such that $b+jm\in A$, and so the above equation shows that $b+jm\geq u(d'+jm) > n_\ell$, contradicting $b\leq n_\ell - im$. Thus
\[
	2|A_{n_\ell}| + (k-1)|B_{n_\ell}|\leq n_\ell + (k+1)im + k(k-1)u,
\]
and the lemma is proved by combining this inequality with \eqref{|B_n|}.

The case $d>x$ is similar, but we consider
\[
	B=\{a\in A: a-jm\in A \text{ for some }j\in\{1,\dots,i\}\},
\]
and we find $d'$ and $\ell$ such that $1\leq\ell\leq kn_0$, $u(d')\leq \eps n_\ell$, and $u(d'-jm)>n_\ell$ for each $j\in\{1,\dots,i\}$.
\end{proof}

%
%

\begin{theorem}\label{thm:fLS}
For every $k\geq2$ and $\eps>0$ there exist natural numbers $N=N(k,\eps)$ and $Q=Q(k,\eps)$ such that if $n_0\geq N$ and $A$ is $k$-sum-free such that
\[
	d_{n_0}(A)\geq\frac{1}{k+1}+\eps,
\]
then either $A_{n_0}$ is contained in a $Q$-periodic $k$-sum-free set, or for every sequence $n_1,n_2,\dots$ of naturals such that $n_{j+1}\geq 16k\eps^{-1} n_j$ for each $j\geq 0$ there is some $\ell\leq kn_0$ such that
\[
	d_{n_\ell}(A) \leq \frac{1}{k+1} + \eps/2.
\]
\end{theorem}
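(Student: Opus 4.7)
The plan is to reduce Theorem~\ref{thm:fLS} to the preceding lemma. A direct computation yields
\[
    \frac{i+k-2}{i(k+1)+k-3}-\frac{1}{k+1}=\frac{(k-1)^2}{(k+1)(i(k+1)+k-3)}=O_k(1/i),
\]
so we fix $i=\lceil C_k/\eps\rceil$ for a suitable constant $C_k$ so that this difference is at most $\eps/4$. Setting $\eps':=\eps/(16k)$, the growth condition $n_{j+1}\geq 16k\eps^{-1}n_j$ of the theorem matches $n_{j+1}\geq(\eps')^{-1}n_j$ in the preceding lemma, and the additive $4k\eps'$ term there contributes exactly $\eps/4$. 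Consequently, it suffices to produce in $A_{n_0}$ an arithmetic progression $x,x+m,\dots,x+(i-1)m$ together with some $d\in A_{n_0}-(k-1)A_{n_0}$ satisfying $d\equiv x\md{m}$: the preceding lemma then furnishes some $\ell\leq kn_0$ with $d_{n_\ell}(A)\leq 1/(k+1)+\eps/2$.

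For each modulus $m$ set $T_m:=\{a\bmod m:a\in A_{n_0}\}\subset\Z/m\Z$. Because $x\in A_{n_0}$ forces $x\bmod m\in T_m$, the required $d$ exists if and only if $x\bmod m$ lies in $R_m:=T_m\cap(T_m-(k-1)T_m)$, and $R_m$ is nonempty precisely when $T_m$ fails to be $k$-sum-free in $\Z/m\Z$. Take $Q:=\operatorname{lcm}(1,2,\dots,M)$ for $M=M(k,\eps)$ to be determined. If $T_m$ is $k$-sum-free in $\Z/m\Z$ for some $m\leq M$, then $A_{n_0}\subset T_m+m\Z$ is in particular $Q$-periodic $k$-sum-free, giving the first case of the conclusion. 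Otherwise $R_m\neq\emptyset$ for every $m\leq M$, and we must locate an $i$-AP in $A_{n_0}$ of common difference $m\leq M$ whose starting residue lies in $R_m$.

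The main obstacle is this last step. Suppose for contradiction no such AP exists: then for every $m\leq M$ and every $r\in R_m$, the set $A_{n_0}\cap(r+m\Z)$ contains no $i$ consecutive terms of $r+m\Z$, so splitting $|A_{n_0}|=\sum_{r\in T_m}|A_{n_0}\cap(r+m\Z)|$ according to $r\in R_m$ versus $r\in T_m\setminus R_m$ yields
\[
    \frac{|A_{n_0}|}{n_0}\leq\alpha_m-\frac{\beta_m}{i}+O(m/n_0),
\]
where $\alpha_m:=|T_m|/m$ and $\beta_m:=|R_m|/m\geq 1/m$. The trivial lower bound $\alpha_m\geq(1/(k+1)+\eps)(1-m/n_0)$ combined with Cauchy-Davenport on $\Z/p\Z$ for prime $p\leq M$ (which yields $R_p=T_p$, hence $\beta_p=\alpha_p$, whenever $\alpha_p>1/k$) produces a contradiction with $|A_{n_0}|/n_0\geq 1/(k+1)+\eps$ once $n_0$ is large enough in terms of $M$. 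This is immediate when $\eps>1/(k(k+1))$, since then $\alpha_p>1/k$ holds automatically for any prime $p\leq M$; for smaller $\eps$ one needs a more delicate iteration across moduli in the spirit of~\cite{luczakschoen}, tuning $M=M(k,\eps)$ so that the cumulative losses force some $R_m$ to collapse. That finitary combinatorial step is the technical heart of the proof. Once the AP and $d$ are produced, the preceding lemma concludes the theorem.
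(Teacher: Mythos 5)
Your reduction to the preceding lemma is set up correctly: choosing $i$ so that $\frac{i+k-2}{i(k+1)+k-3}\leq\frac{1}{k+1}+\eps/4$ and taking $\eps'=\eps/(16k)$ so that $4k\eps'=\eps/4$ is exactly right, and the goal of producing an AP $x,\dots,x+(i-1)m$ in $A_{n_0}$ together with $d\in A_{n_0}-(k-1)A_{n_0}$ with $d\equiv x\md{m}$ is the correct target. The framing via $T_m$ and $R_m=T_m\cap(T_m-(k-1)T_m)$, and the observation that if some $T_m$ is $k$-sum-free then $A_{n_0}\subset T_m+m\Z$ gives the first alternative, are all sound.

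However, the remaining step --- locating an $i$-AP of difference $m$ whose starting point lies in a residue class of $R_m$ --- is where the argument breaks down, and you acknowledge as much. The counting bound $d_{n_0}(A)\leq\alpha_m-\beta_m/i+O(m/n_0)$ is correct, but it is far too weak to reach a contradiction: $\alpha_m$ can be close to $1$ and $\beta_m$ as small as $1/m$, so the right-hand side can easily exceed $\frac{1}{k+1}+\eps$. Moreover even the claimed ``immediate'' case does not close: Cauchy--Davenport gives $R_p=T_p$ when $|T_p|>p/k$, so the bound becomes $d_{n_0}(A)\leq\alpha_p(1-1/i)+O(p/n_0)$, but since $i$ is \emph{large} (tending to $\infty$ as $\eps\to0$) and $\alpha_p$ can be close to $1$, this is no contradiction. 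The underlying issue is that the counting argument uses only the density of $A_{n_0}$ and never exploits its $k$-sum-freeness, so it cannot force structure on $T_m$.

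The paper's proof takes a different and decisive route at exactly this point. It fixes a single modulus $Q$ in advance via Szemer\'edi's theorem, chosen so that any subset of $\{1,\dots,n_0\}$ of density $\geq\eps/2$ contains an $i$-AP with common difference dividing $Q$. With $R=\{r:r\equiv a\md{Q}\text{ for some }a\in A_{n_0}\}$ it then isolates the ``dead'' residues $D=\{r\in R:r+t_1+\cdots+t_{k-1}\notin R\text{ for all }t_j\in R\}$. Crucially, $D$ is itself $k$-sum-free in $\N$, so a shifted-copies argument (using that $R$, hence $D$, contains a $k$-sum witness $x_1+\cdots+x_k=x$ with $x\leq kQ$) gives $(k+1)|D_{n_0}|\leq n_0+(k-1)kQ$, whence $d_{n_0}(A\setminus D)\geq\eps/2$. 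Applying Szemer\'edi's theorem to $A_{n_0}\setminus D_{n_0}$ now produces the AP, and because its initial term $x$ has residue in $R\setminus D$, the definition of $D$ directly supplies $u,v_1,\dots,v_{k-1}\in A_{n_0}$ with $u-v_1-\cdots-v_{k-1}\equiv x\md{Q}$, hence $\md{m}$. This sidesteps the per-modulus counting entirely: $k$-sum-freeness of $A$ enters via the $k$-sum-freeness of $D$, not via a density bound on residue classes. You should replace your incomplete step with this construction.
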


\begin{proof}
Choose $i=i(k,\eps)$ so that
\[
	\frac{i+k-2}{i(k+1)+k-3} \leq \frac{1}{k+1} + \eps/4,
\]
and then choose $N=N(k,\eps)$ and $Q=Q(k,\eps)$ so that if $n_0\geq N$ every subset of $\{1,\dots,n_0\}$ of size at least $(\eps/2) n_0$ contains an arithmetic progression of length $i$ and common difference dividing $Q$, and so that
\[
	\frac{(k-1)k}{k+1}\frac{Q}{N} < \eps/2.
\]
The existence of $N$ and $Q$ follows from Szemer{\'e}di's theorem.

Suppose $d_{n_0}(A)\geq1/(k+1)+\eps$. Let
\[
	R = \{r\in\N : r\equiv a\md{Q}\text{ for some }a\in A_{n_0}\},
\]
and
\[
	D = \{r\in R: r + t_1 + \cdots + t_{k-1}\notin R \text{ for each }t_1,\dots,t_{k-1}\in R\}.
\]
If $A_{n_0}$ is not contained in a $Q$-periodic $k$-sum-free set then $R$ must not be $k$-sum-free. Suppose $x_1,\dots,x_k,x\in R$ and $x_1+\cdots+x_k=x$, where we may assume $x\leq kQ$. Since $D$ is $k$-sum-free the sets $D,D+x_1+\cdots+x_{k-1},D+x+x_1+\cdots+x_{k-2},\dots,D+(k-1)x$ are disjoint, so
\[
	(k+1)|D_{n_0}|\leq n_0 + (k-1)kQ,
\]
whence
\[
	d_{n_0}(A\setminus D) \geq \eps - \frac{(k-1)k}{k+1}\frac{Q}{n_0} \geq \eps/2.
\]
It follows that $A_{n_0}\setminus D_{n_0}$ contains an arithmetic progression $x, x + m, \dots, x + (i-1)m$, where $m$ divides $Q$. By definition of $R$ and $D$ there exists $u,v_1,\dots,v_{k-1}\in A_{n_0}$ such that $x + v_1 + \cdots + v_{k-1} = u\md{Q}$. But then $d = u-v_1-\cdots-v_{k-1}\equiv x\md{m}$, so by the lemma there is some $\ell\leq kn_0$ such that
\[
	d_{n_\ell}(A) \leq \frac{i+k-2}{i(k+1)+k-3} + \eps/4 \leq \frac{1}{k+1} + \eps/2.\qedhere
\]
\end{proof}

We may easily recover the original {\L}uczak-Schoen theorem from the above finitary version. Indeed, for $k\geq 2$ and $\eps>0$, let $N$ and $Q$ be as in the above theorem, and suppose $A$ is $k$-sum-free and $d_{n_i}(A)\geq1/(k+1)+\eps$ for each $i$, where $n_i\to\infty$. By passing to a subsequence of $(n_i)$ we may assume $n_0\geq N$, $n_{j+1}\geq 16k\eps^{-1}n_j$ for each $j\geq 0$, and, if $A$ is not contained in a $Q$-periodic $k$-sum-free set, $A_{n_0}$ is not contained in a $Q$-periodic $k$-sum-free set. But then by the above theorem there is some $\ell$ such that $d_{n_\ell}(A)\leq 1/(k+1)+\eps/2$, a contradiction.

\section{Finitary proof of Theorem~\ref{thm:main}}\label{sec:effproof}

\begin{lemma}
Fix $\eps>0$ and let $N$ and $Q$ be as in Theorem~\ref{thm:fLS}. Then for every $n_0\geq N$ there exists a finitely supported measure $\nu$ on $\N$ such that if $A$ is $k$-sum-free and $A_{n_0}$ is not contained in a $Q$-periodic $k$-sum-free set then $\nu(A)\leq 1/(k+1)+2\eps$.
\end{lemma}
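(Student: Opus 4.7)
The plan is to construct $\nu$ as a uniformly weighted average of the single-scale measures $\nu_{n_\ell}$ on $\{1,\ldots,n_\ell\}$ over a very long geometrically spaced sequence of scales, and to bound $\nu(A)$ by iterating Theorem~\ref{thm:fLS}.

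Concretely, I would fix an integer $M$ large (to be chosen tower-exponentially in $n_0$, $k$, and $\eps^{-1}$) and pick a sequence $n_1<n_2<\cdots<n_M$ with $n_1\geq n_0$ and $n_{j+1}\geq 16k\eps^{-1}n_j$ for each $j$. Set
\[
\nu = \frac{1}{M}\sum_{\ell=1}^{M}\nu_{n_\ell},
\]
a finitely supported probability measure, so that $\nu(A)=\frac{1}{M}\sum_\ell d_{n_\ell}(A)$. The crucial observation is that the hypothesis on $A_{n_0}$ propagates up the sequence: since $A_{n_j}\supseteq A_{n_0}$ for every $j\geq 0$, any $Q$-periodic $k$-sum-free set containing $A_{n_j}$ would also contain $A_{n_0}$, contradicting the hypothesis. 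Hence Theorem~\ref{thm:fLS} is applicable at every scale $n_j$ where $d_{n_j}(A)\geq 1/(k+1)+\eps$, yielding some index $\ell\leq kn_j$ with $d_{n_{j+\ell}}(A)\leq 1/(k+1)+\eps/2$.

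It would then suffice to establish that the set $B=\{\ell\in\{1,\ldots,M\}:d_{n_\ell}(A)>1/(k+1)+\eps\}$ of \emph{bad} indices satisfies $|B|\leq\eps M$; for then
\[
\nu(A)\leq\frac{|B|}{M}+\left(1-\frac{|B|}{M}\right)\left(\frac{1}{k+1}+\eps\right)\leq\frac{1}{k+1}+2\eps.
\]
The main obstacle will be showing this density estimate. A single application of Theorem~\ref{thm:fLS} at a bad index only forces one good index later and permits a gap of up to $kn_j$, which naively only produces a fraction $\sim 1/(kn_0)$ of guaranteed good indices. To amplify, I would apply Theorem~\ref{thm:fLS} to many different geometrically growing subsequences of $(n_\ell)$: for each bad base index $j$ and step size $t\geq 1$, the subsequence $n_j,n_{j+t},n_{j+2t},\ldots$ still satisfies the growth hypothesis (with ratio $r^t$ in place of $r=16k\eps^{-1}$), and a fresh application produces a distinct good index of the form $j+\ell t$ with $\ell\leq kn_j$. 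Bookkeeping these forced good indices carefully over many pairs $(j,t)$, and taking $M$ large enough that the set of good indices attains density $1-\eps$ in $\{1,\ldots,M\}$, should deliver the required bound on $|B|/M$ and hence the lemma.
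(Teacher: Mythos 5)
Your plan has a genuine gap at the crucial step, the density estimate $|B|\leq\eps M$ for the set of bad indices, and the flat uniform average $\nu=\frac{1}{M}\sum_{\ell=1}^M\nu_{n_\ell}$ is not robust enough to make it work.

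The core difficulty is that the bound $kn_j$ coming out of Theorem~\ref{thm:fLS} depends on the \emph{position} $j$ of the bad scale at which you apply it, and $n_j$ grows at least geometrically in $j$. If the first index $j_0$ with $d_{n_{j_0}}(A)\geq 1/(k+1)+\eps$ happens to sit around $M/2$, then $kn_{j_0}$ can dwarf $M$, and the theorem imposes essentially no constraint on the indices $j>j_0$. Meanwhile the indices $j<j_0$ can all have density just below $1/(k+1)+\eps$, so the flat average can be as large as roughly $\frac12(1/(k+1)+\eps)+\frac12\cdot1$, far above the target. Your proposed amplification by arithmetic subsequences does not repair this: each application of Theorem~\ref{thm:fLS} to the progression $n_j,n_{j+t},n_{j+2t},\ldots$ certifies only that \emph{some} $\ell\leq kn_j$ gives a good index $j+\ell t$, and different step sizes $t$ can perfectly well point at the \emph{same} good index (e.g.\ whenever $t$ divides $g-j$ for a fixed good $g$); nothing forces the certified good indices to be distinct, and nothing bounds $kn_j$ in terms of $M$.

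The paper avoids both problems with two ideas you are missing. First, instead of applying the theorem along arithmetic subsequences, it applies it along the subsequence of \emph{bad} indices themselves: if more than $kn_{i_0}$ indices $i>i_0$ satisfied $d_{n_i}(A)\geq 1/(k+1)+\eps/2$, one could feed precisely that subsequence (prepended with $n_{i_0}$) into Theorem~\ref{thm:fLS} and derive a contradiction; hence at most $kn_{i_0}$ later indices are bad. Second, it does not use a flat average. It groups the scales into blocks whose lengths grow in lockstep with $kn_{i_s}$, namely $i_{s+1}-i_s\geq 2\eps^{-1}kn_{i_s}$, and lets $\nu$ be the average of the first $t+1$ block averages with $t\geq 2\eps^{-1}$. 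Taking $s_0$ to be the first block with large block average forces a large $d_{n_{i_0}}(A)$ for some $i_0\leq i_{s_0}$, and then the bad count $\leq kn_{i_0}\leq kn_{i_{s_0}}$ is by construction at most an $\eps/2$-fraction of the very next block (hence of everything after). This renders the bound uniformly useful no matter where $i_0$ falls, which is exactly what the flat average cannot achieve.
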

\begin{proof}
Continue $n_0$ to a sequence $(n_i)$ satisfying $n_{i+1}\geq 16k\eps^{-1}n_i$ for each $i\geq 0$, and let $(i_s)$ be a sequence of indices such that $i_{-1}=-1$, $i_0=0$, and $i_{s+1}-i_s\geq 2\eps^{-1} kn_{i_s}$ for each $s\geq 0$. Define measures $\nu_s$ by
\[
	\nu_s(A) = \frac{1}{i_s-i_{s-1}}\sum_{i=i_{s-1}+1}^{i_s} d_{n_i}(A),
\]
and define $\nu$ by
\[
	\nu(A) = \frac{1}{t+1}\sum_{s=0}^t \nu_s(A),
\]
where $t\geq 2\eps^{-1}$.

Suppose that $A$ is $k$-sum-free, and let $s_0$ be the least $s$ such that $\nu_s(A)\geq 1/(k+1)+\eps$: if no such $s$ exists then $\nu(A)\leq 1/(k+1)+\eps$, so we are done. Find $i_0$ such that $i_{s_0-1}<i_0\leq i_{s_0}$ and $d_{n_{i_0}}(A)\geq 1/(k+1)+\eps$. If $A_{n_{i_0}}$ is not contained in a $Q$-periodic $k$-sum-free set then by Theorem~\ref{thm:fLS} there are at most $kn_{i_0}$ indices $i>i_{s_0}$ such that $d_{n_i}(A)\geq 1/(k+1)+\eps/2$. Since $kn_{i_0}\leq kn_{i_{s_0}} \leq (\eps/2)(i_{s_0+1} - i_{s_0})$ we find that
\[
	\nu(A) \leq 1/(k+1)+\eps + 1/(t+1) + \eps/2 \leq 1/(k+1)+2\eps.\qedhere
\]
\end{proof}

The next lemma uses an idea based on the contraction mapping theorem which also appeared in \cite{EGM}.

\begin{lemma}
For every $\eps>0$ there is a finitely supported measure $\mu$ on $\N$ such that every $k$-sum-free set $A$ satisfies $\mu(A)\leq 1/(k+1) + 4\eps$.
\end{lemma}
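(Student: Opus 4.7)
Apply the previous lemma with parameter $\eps$ to obtain $N$, $Q$, and, for each $n_0 \geq N$, a measure $\nu_{n_0}$ handling every ``generic'' $k$-sum-free $A$ (those for which $\phi(A_{n_0})$ is not $k$-sum-free in $\Z/Q\Z$, where $\phi:\N\to\Z/Q\Z$ is reduction mod $Q$). The remaining ``periodic'' case is $\phi(A_{n_0}) \subseteq S$ for some $k$-sum-free $S \subseteq \Z/Q\Z$; the key observation is that then $0 \notin S$ (since $0 + \cdots + 0 = 0$), so $A_{n_0}$ contains no multiples of $Q$. This suggests iterating on pushforwards by $Q$ to escape such traps.

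I would define $\mu$ to be (a truncation of) the unique fixed point of the affine contraction
\[
T(\mu) = \alpha\,\nu_{n_0} + (1-\alpha)\,Q_{*}\mu,
\]
where $\alpha = k/(k+1)$ and $Q_{*}\mu$ denotes the pushforward of $\mu$ under multiplication by $Q$. Since $T$ is a strict contraction with ratio $1-\alpha = 1/(k+1)$, its unique fixed point is the geometric mixture $\mu^{*} = \alpha\sum_{t \geq 0}(1-\alpha)^{t}\,(Q^{t})_{*}\nu_{n_0}$; truncating at a large $T$ and renormalising gives a finitely supported $\mu$ that agrees with $\mu^{*}$ up to a negligible $(1-\alpha)^{T}$ error. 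For any $k$-sum-free $A$, $\mu^{*}(A) = \alpha\sum_{t}(1-\alpha)^{t}\nu_{n_0}(A/Q^{t})$, where $A/Q^{t} := \{a : Q^{t}a \in A\}$ is again $k$-sum-free.

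Let $T^{*}$ be the least $t$ at which $A/Q^{t}$ is in the periodic case. For $t < T^{*}$ the previous lemma gives $\nu_{n_0}(A/Q^{t}) \leq 1/(k+1) + 2\eps$. At $t = T^{*}$, the containment $A/Q^{T^{*}} \subseteq \phi^{-1}(S)$ combined with the approximate uniformity of $\nu_{n_0}$ yields $\nu_{n_0}(A/Q^{T^{*}}) \leq |S|/Q + o(1) \leq 1/k + o(1)$, where the last bound uses the folklore inequality $|S|/Q \leq 1/k$ for $k$-sum-free $S \subseteq \Z/Q\Z$. For $t > T^{*}$, since $0 \notin S$, the set $A/Q^{T^{*}}$ contains no multiples of $Q$, so $A/Q^{T^{*}+1} = \emptyset$ and all subsequent terms vanish. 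With $\alpha = k/(k+1)$, so $1-\alpha = 1/(k+1) =: r$ and $\alpha/k = r$, a direct telescoping gives
\[
\mu^{*}(A) \leq (r + 2\eps)(1 - r^{T^{*}}) + r^{T^{*}+1} = r + 2\eps(1 - r^{T^{*}}) \leq 1/(k+1) + 2\eps,
\]
comfortably within the target $1/(k+1) + 4\eps$, for any value of $T^{*}$ including $T^{*} = 0$ and $T^{*} = \infty$.

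\textbf{Main obstacle.} The most delicate technical points are (i) verifying the density bound $|S|/Q \leq 1/k$ for $k$-sum-free $S \subseteq \Z/Q\Z$, which should follow from $|A| + |k\text{-fold sumset of }A| \leq Q$ together with Kneser/Cauchy--Davenport-type lower bounds on $|kA|$, and (ii) scale compatibility: since $\nu_{n_0}$ is supported on a range much larger than $\{1,\dots,n_0\}$, the cascade ``$A/Q^{T^{*}}$ avoids $Q\N$ implies $A/Q^{T^{*}+1} = \emptyset$'' requires careful interpretation at the correct scale. The standard fix is to first restrict attention to $A \cap [1,M]$ for the support size $M$ of $\mu$ (which is legitimate since $\mu$ is finitely supported), and then choose $n_0$ much larger than $M$ in terms of $\eps$ and $Q$.
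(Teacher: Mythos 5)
Your high-level plan — weight pushforwards by $Q$ geometrically so that a $k$-sum-free set trapped in a $Q$-periodic set can be escaped by dividing by $Q$ — is exactly the paper's idea, and the contraction-mapping framing is the same. However, there is a genuine gap that your ``Main obstacle (ii)'' correctly identifies but does not actually repair.

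The problem is that you use a \emph{single} measure $\nu_{n_0}$ throughout and push it forward by powers of $Q$. When $(A/Q^{T^*})_{n_0}$ is contained in a $Q$-periodic $k$-sum-free set you only learn that $A/Q^{T^*}$ avoids $Q\N$ on $[1,n_0]$, hence that $A/Q^{T^*+1}$ is empty on $[1,n_0/Q]$. But $\nu_{n_0}$ is supported on a set whose maximum element is enormously larger than $n_0$ (in the previous lemma it is built from a rapidly growing sequence $n_1, n_2, \dots$ with $n_{j+1}\geq 16k\eps^{-1}n_j$), so $\nu_{n_0}(A/Q^{T^*+1})$ need not vanish and the tail of your series is not controlled. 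Your proposed fix — restrict to $A\cap[1,M]$ where $M$ is the support size of $\mu$, and pick $n_0\gg M$ — is circular: $M$ is at least the maximum of the support of $\nu_{n_0}$, which already exceeds $n_0$, and truncating your geometric series at level $T$ inflates $M$ further by a factor of $Q^T$. There is no way to choose $n_0$ larger than $M$ when $M$ is built out of $n_0$.

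The paper resolves this by building the measures iteratively rather than as pushforwards of one fixed $\nu_{n_0}$: setting $\mu_1=\nu_N$ and, once $\mu_i$ has support in $\{1,\dots,M_i\}$,
\[
\mu_{i+1} = \tfrac{k}{k+1}\,\tau_*\mu_i + \tfrac{1}{k+1}\,\nu_{QM_i},
\]
where $\tau(x)=Qx$. Here the fresh measure $\nu_{QM_i}$ is chosen at a scale large enough that when it detects periodicity (i.e.\ $\nu_{QM_i}(A)>1/(k+1)+2\eps$ forces $A_{QM_i}$ into a $Q$-periodic set), the conclusion ``$A$ avoids $Q\N$'' holds on all of $[1,QM_i]$, which contains the entire support of $\tau_*\mu_i$; hence $\tau_*\mu_i(A)=0$ outright and $\mu_{i+1}(A)\leq 1/(k+1)$. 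This sidesteps your need for the bound $|S|/Q\leq 1/k$ on $k$-sum-free subsets of $\Z/Q\Z$ entirely (a bound you would otherwise need to prove). The rest of the induction then gives $\mu_{i+1}(A)\leq 1/(k+1)+2\eps+(k/(k+1))^{i+1}$ in the non-periodic case, matching your intended telescoping estimate. So your structural intuition is right, but you need the nested choice of scales $QM_i$ rather than a fixed $n_0$; without it the argument does not close.
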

\begin{proof}
Let $\nu_{n_0}$ be the measure constructed by the previous lemma for $n_0\geq N$, and let $\tau:\N\to\N$ be the map $x\mapsto Qx$. Define measures $\mu_i$ inductively as follows. Start with $\mu_1=\nu_N$, and thereafter if $\mu_i$ is supported on $\{1,\dots,M_i\}$ let
\[
	\mu_{i+1} = \frac{k}{k+1}\tau_\ast\mu_i + \frac{1}{k+1}\nu_{QM_i}.
\]

If $A$ is $k$-sum-free and $\nu_{QM_i}(A)>1/(k+1)+2\eps$ then by the previous lemma $A_{QM_i}$ is contained in a $Q$-periodic $k$-sum-free set. In particular $A_{QM_i}$ is disjoint from $Q\N$, so $\tau_\ast\mu_i(A)=0$. Hence in this case $\mu_{i+1}(A)\leq 1/(k+1)$.

If on the other hand $\nu_{QM_i}(A)\leq 1/(k+1)+2\eps$ then by induction we have
\begin{align*}
	\mu_{i+1}(A) &\leq \frac{k}{k+1}\left(\frac{1}{k+1}+2\eps + \left(\frac{k}{k+1}\right)^i\right) + \frac{1}{k+1}\left(\frac{1}{k+1}+2\eps\right)\\
	&= \frac{1}{k+1}+2\eps + \left(\frac{k}{k+1}\right)^{i+1}.
\end{align*}

Hence if $i$ is chosen large enough that $(k/(k+1))^i\leq 2\eps$ then $\mu_i(A)\leq 1/(k+1)+4\eps$ for every $k$-sum-free $A\subset\N$.
\end{proof}


To finish the proof of Theorem~\ref{thm:main} we need a version of Lemma~\ref{lem:erdosfolner} for measures.

\begin{lemma}
Suppose that $F$ satisfies
\[
	\frac{|(a\cdot F)\triangle F|}{|F|} \leq \eps
\]
for every $a\in\{1,\dots,n\}$, and that $F$ has a $k$-sum-free subset $S$ of size at least $\delta|F|$. Then for every probability measure $\mu$ supported on $\{1,\dots,n\}$ there is a $k$-sum-free set $A$ for which $\mu(A) \geq \delta - \eps$.
\end{lemma}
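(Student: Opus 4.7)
The plan is to run the probabilistic dilation argument of Lemma~\ref{lem:erdosfolner}, but weight each $a$ by $\mu(\{a\})$ rather than by $1/|A|$. First pick $x\in F$ uniformly at random and define
\[
	A_x = \{a\in\N : ax\in S\}.
\]
The set $A_x$ is automatically $k$-sum-free for every $x$: if $a_1,\dots,a_k,b\in A_x$ satisfied $a_1+\cdots+a_k = b$, then multiplying by $x$ would exhibit $k$ elements of $S$ summing to an element of $S$, contradicting $k$-sum-freeness of $S$.

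Next I would compute the $\mu$-mass of $A_x$ in expectation:
\[
	\E[\mu(A_x)] = \sum_{a=1}^n \mu(\{a\})\,\P(ax\in S).
\]
Since $y\mapsto ay$ is a bijection, $|\{x\in F:ax\in S\}| = |(a\cdot F)\cap S|$, and because $S\subset F$ we have
\[
	|(a\cdot F)\cap S| \geq |S| - |F\setminus(a\cdot F)| \geq |S| - |(a\cdot F)\triangle F|.
\]
Dividing by $|F|$ and using the hypotheses $|S|\geq\delta|F|$ and $|(a\cdot F)\triangle F|\leq\eps|F|$ for every $a\leq n$ gives $\P(ax\in S)\geq\delta-\eps$ uniformly in $a\in\{1,\dots,n\}$.

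Summing against $\mu$ yields $\E[\mu(A_x)]\geq\delta-\eps$, so there exists $x\in F$ with $\mu(A_x)\geq\delta-\eps$, and we may take $A=A_x$. There is no real obstacle here beyond keeping track of the Følner defect; the only small point worth verifying is that the containment $S\subset F$ lets one bound $|(a\cdot F)\cap S|$ by the Følner quantity rather than by a more delicate intersection.
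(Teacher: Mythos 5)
Your proof is correct and follows essentially the same probabilistic dilation argument as the paper, picking $x\in F$ uniformly, forming $A_x=\{a:ax\in S\}$, and bounding $\E[\mu(A_x)]$ from below using the F{\o}lner-type hypothesis. The only cosmetic difference is that you unpack the inequality $\P(ax\in S)\geq\delta-\eps$ via the explicit bijection $x\mapsto ax$ and the containment $S\subset F$, whereas the paper invokes the estimate $|\P(ax\in S)-\P(x\in S)|\leq|(a\cdot F)\triangle F|/|F|$ directly.
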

\begin{proof}
For $x\in F$, let $A_x$ be the set of all $a\in\{1,\dots,n\}$ such that $ax\in S$. Then $A_x$ is $k$-sum-free, and if we choose $x\in F$ uniformly at random then the expected measure of $A_x$ is
\begin{align*}
	\E(\mu(A_x)) 
		&= \int \P(a\in A_x)\,d\mu(a) = \int \P(ax\in S)\,d\mu(a)\\
		&\geq \int \left(\P(x\in S) - \frac{|(a\cdot F)\triangle F|}{|F|}\right)\,d\mu(a)\\
		&\geq \delta-\eps.\qedhere
\end{align*}
\end{proof}

Theorem~\ref{thm:main} follows from the previous two lemmas.

\section{Final remarks}

{\L}uczak and Schoen~\cite{luczakschoen} also considered so-called strongly $k$-sum-free sets, sets which are $\ell$-sum-free for each $\ell=2,\dots,k$. They prove that every maximal strongly $k$-sum-free subset of $\N$ of upper density larger than $1/(2k-1)$ is periodic. Using this theorem one may easily modify Section \ref{sec:ineffproof} to verify that for any F{\o}lner sequence $(F_m)$ in $(\N,\cdot)$ the sets $F_m$ contain no strongly $k$-sum-free subset of size larger than $(1/(2k-1) + o(1))|F_m|$. One may also adapt the methods of Sections \ref{sec:finitaryluczakschoen} and \ref{sec:effproof} to give an effective proof of this theorem. We leave the details to the energetic reader.

\subsubsection*{Acknowledgements} I am grateful to Ben Green and to Freddie Manners for helpful comments and discussion.

\bibliography{star-free}{}
\bibliographystyle{alpha}
\end{document}